\newcommand{\rom}[1]{\uppercase\expandafter{\romannumeral #1\relax}}
\numberwithin{equation}{section}
\newtheorem{theorem}{Theorem}[section]
\newtheorem{lemma}[theorem]{Lemma}
\newtheorem{proposition}[theorem]{Proposition}
\newtheorem{remark}[theorem]{Remark}
\newtheorem{assumption}[theorem]{Assumption}
\def\a{\tau}
\def\k{\kappa}
\def\b{\gamma}
\def\d{\delta}
\def\e{\epsilon}
\def\g{\gamma}
\def\lam{\lambda}
\def\o{\omega}
\def\p{\phi}
\def\th{\theta}
\def\v{\varepsilon}
\def\L{\Lambda}
\def\p{\rho}
\def\S{\Sigma}
\def\E{\mathbb E}
\def\R{\mathbb R}
\def\Mx{\mathcal P}
\def\l{\left}
\def\r{\right}
\def\la{\left\langle}
\def\ra{\right\rangle}
\def\ll{\left\lVert}
\def\rl{\right\rVert}
\def\lv{\left\lvert}
\def\rv{\right\rvert}
\def\({\left(}
\def\){\right)}
\def\[{\left[}
\def\]{\right]}
\def\pt{\partial}
\def\nb{\nabla}
\def\ds{\displaystyle}
\def\qd{\quad}
\def\h{\hat}
\def\t{\tilde}
\def\p{\psi}
\def\Mx{M_x}
\def\Mv{M_v}
\def\st{*}
\def\T{T}
\def\L{L}
\def\K{R}
\def\cc{\e_0}
\def\bb{C_2}
\def\w{\o_0}
\begin{document}

\title{A Sharp Convergence Rate for the Asynchronous Stochastic Gradient Descent}

\author{ 
Yuhua Zhu\footnote{Department of Mathematics, Stanford University. (yuhuazhu@stanford.edu)}\quad
Lexing Ying\footnote{Department of Mathematics, Stanford University. (lexing@stanford.edu)}
}

\date{}
\maketitle

\newcommand\todo[1]{{{\color{red}To do: #1}}}


\begin{abstract}
{We give a sharp convergence rate for the asynchronous stochastic gradient descent (ASGD) algorithms
when the loss function is a perturbed quadratic function based on the stochastic modified equations
introduced in [An et al. Stochastic modified equations for the asynchronous stochastic gradient
  descent, arXiv:1805.08244]. We prove that when the number of local workers is larger than the
expected staleness, then ASGD is more efficient than stochastic
gradient descent. Our theoretical result also suggests that longer delays result in slower convergence rate. Besides, the learning rate cannot be smaller than a threshold inversely proportional to the expected staleness. }
\end{abstract}

{\bf Keywords. Asynchronous Stochastic Gradient Descent; Stochastic Modified Equations; Distributed
  Learning.}



\section{Introduction}
\label{sec:Intro}
Thanks to the availability of large datasets and modern computing resources, optimization-based
machine learning has achieved state-of-the-art results in many applications of artificial
intelligence. As the datasets continue to increase, distributed optimization algorithm have received
more attention for solving large scale machine learning problems. Parallel {\em stochastic gradient
  descent} (SGD) is arguably the most popular one among them.


Based on the interaction between different working nodes, there are two types of parallel SGD
algorithms: synchronous v.s. asynchronous SGD (SSGD v.s. ASGD). Both methods compute the gradient of
the loss function for a given mini-batch on local workers. In SSGD, the local workers pause the
training process until the gradients from all local works have been added into the {\em shared}
parameter variable. While in ASGD, the local workers interact with the shared parameter
independently without any synchronization, i.e., each local worker continues to compute the next
gradient right after their own gradients have been added to the shared parameter. Therefore, ASGD is
presumably more efficient than SSGD since the overall training speed is not affected by the slow
local workers. On the other hand, ASGD can potentially suffer from the problem of delayed gradients,
i.e., the gradients that a local worker sends to the shared parameter are often computed with
respect to the parameter of an older version of the model. Therefore, extra stochasticity is
introduced in ASGD due to this delay. An interesting mathematical problem is how the delayed
gradient affects the training process.


There have been a few papers in the literature that analyze the convergence rate of ASGD, but most
of them from an optimization perspective. For example,
\cite{recht2011hogwild,duchi2013estimation,mania2015perturbed} proved that ASGD can achieve a nearly
optimal rate of convergence when the optimization problem is sparse; \cite{mitliagkas2016asynchrony}
studies the relationship between ASGD and momentum SGD.


This note follows a perspective based on partial differential equation (PDE) and stochastic
differential equation (SDE). In \cite{li2017stochastic}, Li et al first introduced the stochastic
modified equations (SME) for modeling the dynamics of SGD in a continuous time
approximation. Following this work, there have been quite a few papers along this direction
\cite{jastrzkebski2017three,mandt2017stochastic,chaudhari2018stochastic,hu2019diffusion} for SGD.
Recently, in \cite{AnLuYing2018}, An et al applied the SME approach to the study of ASGD.


This note studies the convergence rate of the time-dependent probability distribution function (PDF)
of ASGD to its steady state distribution by using PDE techniques of the stochastic modified
equation. The main focus is on the case where the loss function is a perturbed quadratic
function. There are mainly two difficulties in this analysis. The first one is that asynchrony
results in a degenerate diffusion operator in the corresponding PDE. No trivial analysis is able to
give an exponential decay rate for the convergence to the steady state. Thanks to the association of
a degenerate diffusion operator and a conservative operator, the decay rate can be recovered through
\textquotedblleft hypocoercivity" \cite{villani2009hypocoercivity}. The key here is to construct a
Lyapunov functional to prove the exponential decay of this functional. The second difficulty is to
obtain a sharp convergence rate. Such a sharp rate is important to understand the influence of the
asynchrony quantitatively. Though our analysis is based on the framework introduced by
\cite{arnold2014sharp}, the current case is more complicated because one has to bound extra terms
introduced by the perturbed loss function around the quadratic function. The main observations of
this paper are the following:
\begin{itemize}
\item We give a sharp convergence rate for ASGD when the loss function is a perturbed quadratic function.
\item For a fixed learning rate, longer delays result in slower convergence rate.
\item The learning rate should not be smaller than a threshold and the threshold is inversely proportional to the staleness rate. See Remark \ref{rmk: staleness conv} for details. 
\item When the number of local workers is larger than the expectated staleness, then
  ASGD is more efficient than SGD. See Remark \ref{rmk: ASGD SGD} for details. 
\end{itemize}


The rest of the paper is organized as follows. Section \ref{sec: model} summarizes the results of
\cite{AnLuYing2018} and derives the PDE for the probability density function (PDF) of ASGD based on
these results. Section \ref{sec: main results} presents the main results, while the proof of the
main theorem is given in Section \ref{sec: proof of main thm}.

\section{PDE for the probability density function of ASGD}
\label{sec: model}
We consider the minimization problem,
\begin{equation}
  \label{eq: main op}
  \min_{\th\in\R^d} f(\th) := \frac{1}{n}\sum_{i=1}^nf_i(\th).
\end{equation}
where $\th$ represents the model parameters, $f_i(\th)$ denotes the loss function at the $i$-th
training sample and $n$ is the size of the training sample set.  In the asynchronous stochastic
gradient descent (ASGD) algorithm, the parameter $\th$ is updated with
\begin{equation*}
  \th_{k+1} = \th_k - \eta\nb_\th f_{\g_k}(\th_{k-\tau_k}),
\end{equation*}
where $\g_k$ is i.i.d. uniform random variable from $\{1, 2, \cdots, n\}$ and $\th_{k-\tau_k}$ is
the delayed read of the parameter $\th$ used to update $\th_{k+1}$ with a random staleness $\tau_k$.

{In \cite{AnLuYing2018}, An et al derived the modified stochastic differential equation for the
algorithm, under the assumption that $\tau_k$ follows the geometric distribution, i.e., $\tau_k = l$
with probability $(1-\k)\k^l$ for $\k\in (0,1)$. We call $\k$ the {\it staleness rate}. Note that if $\k$ is larger, then there are a longer delay. Besides, the expectation of the random staleness $\tau_k$ is $\frac{1}{1-k}$, so we call $\frac{1}{1-k}$ the {\it expected staleness}. }By introducing
\begin{equation*}
  y_k = -\sqrt{\frac{\eta}{1-\k}} \E_{\tau_k}\nb f(\th_{\tau_k}), 
\end{equation*}
when the learning rate $\eta$ is small, $(\th_k, y_k)$ can be approximated by time discretizations
of a continuous time stochastic process $\l(\Theta_{k\d_t}, Y_{k\d_t}\r)$ for
$\d_t=\sqrt{\eta(1-\k)}$. $\l(\Theta_t, Y_t\r)$ satisfies the stochastic differential equation
(SDE)
\begin{equation}
  \begin{aligned}
    \label{eq: main SME}
    &d\Theta_t = Y_tdt + \a dB_t,\\
    &dY_t = -\nb f(\Theta_t)dt - \b Y_tdt,
  \end{aligned}
\end{equation}
where $\a = \eta^{3/4}/(1-\k)^{1/4}\S$, $\b = \sqrt{((1-\k)/\eta)}$, and $\S$ is the covariance
matrix conditioned on $\tau_k$, which is assumed to be a constant.

We first formally derive the partial differential equation for the probability density function
$\p(t,\th,y)$ of $(\Theta_t, Y_t)$. For any compactly supported $C^\infty$ function
$\phi(\th_t,y_t)$, by It$\h{o}$'s formula,
\begin{equation*}
  d\phi(\th_t, y_t) = \l(y_t\cdot\nb_\th\phi + (-\nb f(\th_t) - \b y_t)\cdot \nb_y\phi + \frac{1}{2}\a^2 \nb_\th\cdot\nb_\th\phi\r)dt + \a \nb_\th\phi dB_t.
\end{equation*}
Taking expectation of this equation and integrating over $[t, t+h]$ leads to
\begin{equation*}
  \frac{1}{h}\E\l(\phi(\th_{t+h}, y_{t+h}) - \phi(\th_t, y_t)\r) = \frac{1}{h}\int_t^{t+h} \E \l(y_s\cdot\nb_\th\phi + (-\nb f(\th_s) - \b y_s) \cdot\nb_y\phi + \frac{\a^2}{2} \nb_\th\cdot\nb_\th\phi\r) ds ,
\end{equation*}
which further gives,
\begin{equation*}
  \begin{aligned}
    &\frac{1}{h}\int \phi(\th, y) \l( \p(t+h,\th,y) - \p(t,\th,y)\r)\,d\th\,dy \\
    =& \frac{1}{h}\int_t^{t+h} \int \l(y\cdot\nb_\th\phi + (-\nb f(\th) - \b y) \cdot\nb_y\phi + \frac{\a^2}{2} \nb_\th\cdot\nb_\th\phi \r) \p(t,\th,y)\,d\th\,dy\, ds.
  \end{aligned}
\end{equation*}
Integrating by parts and letting $h\to0$ results in
\begin{equation*}
  \int \phi(\th, y) \pt_t\p\,d\th\,dy = \int \phi\l(-y\cdot\nb_\th\p +\nb f(\th)\cdot\nb_y\p +\nb_y\cdot (\b y\p) + \frac{\a^2}{2}\nb_\th\cdot\nb_\th\p \r) \,d\th\,dy,
\end{equation*}
which is true for any test function. Therefore, the PDF $\p(t,\th,y)$ satisfies
\begin{equation}
  \label{eq: origin pde}
  \pt_t\p + y\cdot\nb_\th\p -\nb f(\th)\cdot\nb_y\p =  \nb_y\cdot(\b y\p) + \frac{\a^2}{2} \nb_\th\cdot\nb_\th\p.
\end{equation}
In what follows, we consider the case where $\nb f$ is a perturbed linear function,
\begin{equation*}
  \nb f(\theta) = \w^2 \th + \v(\th).
\end{equation*}
A further change-of-variable of 
\begin{equation*}
  x = y, \qd v = -\w^2\th - \b y,
\end{equation*}
turns (\ref{eq: origin pde}) into
\begin{equation}
  \label{eq: VFP g}
  \pt_tg + v\cdot\nb_xg -\w^2 x\cdot\nb_vg = \b \nb_v\cdot\l( vg + \frac{1}{\beta}\nb_v g\r) +
  \v\(-\frac{1}{\w^2}(v +\b x)\)\cdot (\nb_xg-\b \nb_v g)
\end{equation}
with $g(t,x,v) = \p\(t,-\frac{1}{\w^2}(v +\b x), x\)$, $\beta = \frac{2\g}{\a^2\w^4}$.

\section{Main results and proof sketch}
\label{sec: main results}

When $\v = 0$, the steady state $M(x,v)$ of (\ref{eq: VFP g}),
\begin{equation}
\label{def of M}
M(x,v) = \Mx\Mv : = \l(\frac{1}{Z_1}e^{-\frac{\beta\w^2}{2}\lv x\rv^2}\r)\l(\frac{1}{Z_2}e^{-\frac{\beta}{2}\lv v\rv^2}\r),
\end{equation}
where $Z_1, Z_2$ are the normalization constants such that $\int \Mv dv = \int \Mx dx = 1$. However,
for general $f(\th)$, unfortunately there is no explicit form of the steady state. By denoting
$F(x,v)$ as the steady state of (\ref{eq: VFP g}), the weighted fluctuation function
\begin{equation*}
  h(t,x,v) = \frac{1}{M}\l[g(t,x,v) - F(x,v)\r]
\end{equation*}
satisfies the following equation,
\begin{equation}
\label{eq: VFP}
    \pt_th + \T h  = \L h +  \K h,
\end{equation}
where 
\begin{equation}
\label{eq: def of operators}
\begin{aligned}
  &\T = v\cdot\nb_x - \w^2x\cdot\nb_v \text{ is the transport operator};\\
  &\L = \b \l[-v\cdot\nb_v + \frac1\beta \nb_v\cdot\nb_v\r] =  \frac{\g}{\beta}\frac{1}{M}\nb_v\cdot(M\nb_v)  \text{ is the Linearized Fokker Planck operator};\\
  &\K h = \v\cdot\l(\nb_xh -\b\nb_vh\r) -\beta\v \cdot \(\w^2x-\g v\) h \text{ is the perturbation terms}.
\end{aligned}
\end{equation}
The above equation (\ref{eq: VFP}) is typically called the {\it microscopic equation} in the
literature. It is also convenient for the forthcoming analysis to define the inner product
$\la\cdot, \cdot \ra$ and the norm $\ll \cdot \rl_*$ as
\begin{equation}
  \begin{aligned}
    \label{def: norm star}
    &\la h,g \ra_\st = \int hg M \,dxdv, \qd\ll h \rl^2_* = \la h, h \ra_\st.
  \end{aligned}
\end{equation}
In addition, $\ll \cdot \rl^2$ is the standard $L^2$ norm with respect to the Lebesgue measure.

Under the above Gaussian measure $M$, the following Poincare inequality holds
\begin{equation}
\label{poincare ineq}
\begin{aligned}
  \ll h \rl^2_\st \leq \frac{1}{d\beta\min\{\w^2, 1\}}\(\ll \nb_xh\rl^2_\st + \ll\nb_vh\rl_\st^2\) ,\qd \text{for }\forall h\ s.t.\ \int hM dxdv= 0
\end{aligned}
\end{equation}
The following key assumption ensures various bounds of the perturbation $\v(\th)$.
\begin{assumption}
  \label{as: k}
  There exists a small constant $\cc>0$, such that,
  \begin{equation*}
    \begin{aligned}
      &\max_{i}\ll \v_i \rl_{L^\infty_{x,v}}, \ll \v\cdot x\rl_{L^\infty_{x,v}}, \ll \v\cdot
      v\rl_{L^\infty_{x,v}}, d \max_{i} \ll \v_i' \rl_{L^\infty_{x,v}}, \sum_{i} \ll \v_i'
      \rl_{L^\infty_{x,v}}, \ll \v'\cdot x\rl_{L^\infty_{x,v}}, \ll \v'\cdot v\rl_{L^\infty_{x,v}}\leq
      \cc
    \end{aligned}
\end{equation*}
\end{assumption}
The following theorem states an exponential decay bound for the fluctuation $h$.
\begin{theorem}
  \label{thm: main thm}
  Under Assumption \ref{as: k} with $\cc$ small enough, the fluctuation $h$ decay exponentially as
  follows,
  \begin{equation*}
    \ll h(t) \rl^2_\st \lesssim e^{-2(\mu - \e) t} H(0),
  \end{equation*} 
  where $H(0) = \ll \nb_xh(0) \rl^2_\st + C\ll \nb_v h(0) \rl^2_\st+ 2\h{C}\la \nb_xh(0),
  \nb_vh(0)\ra_\st$, $\e = \cc C_1$ for a constant $C_1$ depending on $\w, \g$ and
  \begin{equation}
    \label{eq: condition on g}
    \l\{ \begin{aligned}
      &\text{when }\g < 2\w: \qd\mu = \g, \qd C = \w^2,\qd\h{C} = \g/2;\\
      &\text{when }\g > 2\w:\qd\mu = \g - \sqrt{\g^2 - 4\w^2}, \qd C = \g^2/2-\w^2, \qd\h{C} = \g/2;\\
      &\text{when }\g = 2\w:\qd\text{for }\forall \d>0,\text{ there exists }C(\d), \h{C}(\d),\text{ such that the decay rate }\mu = \g - \d .\\
    \end{aligned}
    \r.
  \end{equation} 
  More specifically $C_1 =\(11 + 11C + 15\h{C}\)\cc \cdot \bb^2\cdot\frac{\max\{1,C\}}{C-\h{C}^2}$,
  where $\bb=\frac{\max\{1,\b,\g^2,\beta\g,\beta\w^2\}}{\min\{1,\w^2\}}$.
\end{theorem}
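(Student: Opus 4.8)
The plan is to run a hypocoercivity argument in the spirit of \cite{arnold2014sharp}, working directly with the Lyapunov functional $H(t)=\ll\nb_xh\rl_*^2+C\ll\nb_vh\rl_*^2+2\h C\la\nb_xh,\nb_vh\ra_*$ that appears in the statement. Since $\T$ is a first-order operator with coefficients linear in $(x,v)$ and $\L$ acts only in $v$, I would first differentiate the microscopic equation \eqref{eq: VFP} in $x$ and in $v$ and record the commutators
\begin{equation*}
[\nb_x,\T]=-\w^2\nb_v,\qquad [\nb_v,\T]=\nb_x,\qquad [\nb_x,\L]=0,\qquad [\nb_v,\L]=-\g\nb_v,
\end{equation*}
so that $u:=\nb_xh$ and $w:=\nb_vh$ satisfy the coupled system
\begin{equation*}
\pt_tu+\T u=\w^2w+\L u+\nb_x(\K h),\qquad \pt_tw+\T w=-u-\g w+\L w+\nb_v(\K h).
\end{equation*}
A preliminary (routine) step is to establish enough regularity and decay at infinity of $g$, hence of $h$, to legitimize these differentiations and all integrations by parts used below; I would do this for the linear equation ($\v\equiv0$) and carry $\K$ as a bounded perturbation.

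Next I would differentiate $H$ in time, using the two structural facts available in $L^2(M)$: $\la\T f,f\ra_*=0$ (skew-adjointness of $\T$) and $\la\L f,g\ra_*=-\frac\g\beta\la\nb_vf,\nb_vg\ra_*$ (so $\L$ is self-adjoint and non-positive). The transport terms drop out, the $\L$-terms produce a non-positive second-order contribution $-\frac\g\beta\big(\ll\nb_vu\rl_*^2+C\ll\nb_vw\rl_*^2+2\h C\la\nb_vu,\nb_vw\ra_*\big)$, and the zeroth-order couplings produce a quadratic form in $\ll u\rl_*,\ll w\rl_*,\la u,w\ra_*$ with coefficients built from $\w^2,\g,C,\h C$. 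Requiring that this quadratic form plus $\mu H$ be non-positive — retaining the negative $\L$-contribution and invoking the Poincar\'e inequality \eqref{poincare ineq} where needed — reduces the linear part of the estimate to an algebraic condition on $C,\h C,\mu$; optimizing over $C,\h C$ with $\h C=\g/2$ yields the stated $\mu$, and the trichotomy in \eqref{eq: condition on g} simply mirrors the oscillatory ($\g<2\w$), overdamped ($\g>2\w$), and critically damped ($\g=2\w$) regimes of the underlying linear dynamics $\ddot z+\g\dot z+\w^2z=0$. The constraint $C>\h C^2$ — needed for $H$ to be equivalent to $\ll\nb_xh\rl_*^2+\ll\nb_vh\rl_*^2$ and for the retained $\L$-term to be non-positive — explains the factor $C-\h C^2$ in $C_1$; the critical case $\g=2\w$ is recovered by letting $C(\d),\h C(\d)$ degenerate as $\d\downarrow0$, which costs the extra $\d$ in the rate.

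The main obstacle is controlling the perturbation contribution
\begin{equation*}
\la\nb_x(\K h),\nb_xh\ra_*+C\la\nb_v(\K h),\nb_vh\ra_*+\h C\big(\la\nb_x(\K h),\nb_vh\ra_*+\la\nb_xh,\nb_v(\K h)\ra_*\big)
\end{equation*}
and showing it is bounded by $\cc C_1H$ with $C_1$ of the stated explicit form. Writing $\K h=\v\cd(\nb_xh-\b\nb_vh)-\beta\v\cd(\w^2x-\g v)h$ and noting that $\v$ is evaluated at $-\tfrac1{\w^2}(v+\b x)$, differentiating $\K h$ produces three types of terms: ones carrying a Jacobian factor $\v'$ (times $\tfrac1{\w^2}$ or $\tfrac\b{\w^2}$), ones carrying a second derivative $\nb^2h$, and ones where a bare $h$ multiplies a polynomial in $(x,v)$. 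The second-derivative terms are the delicate ones: I would integrate by parts in the weighted measure to move one derivative off $\nb^2h$; since $\nb_{x_j}M/M=-\beta\w^2x_j$ and $\nb_{v_j}M/M=-\beta v_j$, this converts $\la\v_j\pt_{x_j}\nb h,\nb h\ra_*$ into $-\tfrac12\la(\pt_{x_j}\v_j)\nb h,\nb h\ra_*+\tfrac{\beta\w^2}2\la x_j\v_j\nb h,\nb h\ra_*$ and the analogous $v$-identity into terms with $\v\cd v$, i.e.\ into quantities of the form $\sum_i\ll\v_i'\rl_{L^\infty}$, $\ll\v\cd x\rl_{L^\infty}$, $\ll\v\cd v\rl_{L^\infty}$ — precisely the list in Assumption \ref{as: k}. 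Each resulting term is then bounded by $\cc$ times one of $\ll\nb_xh\rl_*^2,\ll\nb_vh\rl_*^2,\lv\la\nb_xh,\nb_vh\ra_*\rv$ up to factors among $\b,\g,\beta,\w$ whose worst combination is $\bb^2$; collecting them with the weights $1,C,\h C$ and using $C>\h C^2$ to absorb the cross term gives the claimed $\cc C_1H$, with the numerical constants $11,11,15$ falling out of the enumeration of terms. Producing these explicit constants, rather than an unspecified $O(\cc)$, is the bookkeeping-heavy core of the proof.

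Finally, combining the linear estimate with this bound yields $\frac{d}{dt}H(t)\le-2(\mu-\e)H(t)$ with $\e=\cc C_1$, so that $\cc$ small makes the rate positive; Gr\"onwall gives $H(t)\le e^{-2(\mu-\e)t}H(0)$. Mass conservation for \eqref{eq: VFP g} forces $\int hM\,dxdv=\int(g-F)\,dxdv=0$ for all $t\ge0$, so the Poincar\'e inequality \eqref{poincare ineq} applied to $h$ gives $\ll h(t)\rl_*^2\lesssim\ll\nb_xh(t)\rl_*^2+\ll\nb_vh(t)\rl_*^2\lesssim H(t)$, which is the asserted bound.
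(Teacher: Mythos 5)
Your proposal follows essentially the same route as the paper: the same Lyapunov functional $H(t)$, energy estimates for $\nb_xh$, $\nb_vh$ and their cross term using the skew-adjointness of $\T$ and the dissipativity of $\L$, control of the perturbation terms $\nb_x(\K h)$, $\nb_v(\K h)$ through integration by parts against $M$, Assumption \ref{as: k} and the Poincar\'e inequality, then Gr\"onwall and a final Poincar\'e step to pass from $H(t)$ to $\ll h(t)\rl^2_\st$. The only cosmetic difference is that you optimize the resulting quadratic form in $C,\h{C},\mu$ directly (reading the trichotomy off the damped-oscillator eigenvalues), whereas the paper packages this step as the matrix inequality $K=QP+Q^\top P\geq 2\mu P$ cited from Arnold--Erb; both yield the same constants.
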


\begin{remark}
\label{rmk: staleness conv}
{ {\bf How the learning rate and staleness affect the convergence rate?} }When the perturbation $\cc$
  is small, the decay rate is dominated by $e^{-2\mu t}$.  By the
definition of $\d_t$, one has $t = k\d_t = k\sqrt{\eta(1-\k)}$ with $k$ the number of steps, $\eta$ the learning rate and $\k$ the staleness rate. Inserting the definition of $\g = \sqrt{((1-\k)/\eta)}$ into  (\ref{eq: condition on g}), the dominated decay rate  $e^{-2\mu t}$ can also be written as,
  \begin{equation}
  \label{eq: condition on rate}
  \l\{ \begin{aligned}
    &\text{when }\eta > \frac{1}{4\w^2}(1-\k): \qd\mu t = (1-\k)k;\\
    &\text{when }\eta  < \frac{1}{4\w^2}(1-\k):\qd\mu t = (1-\k)k - \(\sqrt{(1-\k)^2 - 4\w^2(1-\k)\eta}\)k.
  \end{aligned}
  \r.
  \end{equation} 
  From the above discussion, we make two observations:
  \begin{itemize}
  \item [--] The learning rate should not be smaller than $\frac{1}{4\w^2}(1-\k)$. For a fixed staleness rate $\k$, when the learning rate is larger than
    the threshold $\frac{1}{4\w^2}(1-\k)$, the convergence rate is a constant only depending on 
    $(1-\k)$. While the learning rate is smaller than this threshold, the convergence rate will become
    slower as the learning rate becomes smaller.
  \item [--] Longer delays result in slower convergence rate.
  For a fixed learning rate, the optimal
    decay rate $e^{-2(1-\k)k}$ only relates to the staleness of the system. If the system has more
    delayed readings from the local workers, i.e., $(1-\k)$ is smaller, then the convergence rate is
    slower.
  \end{itemize}
  All the above discussion is based on the assumption that  $\eta$ is small enough so that the SME-ASGD is a good approximation for ASGD. In other words, we assume $\w$ is large here, hence the threshold $\frac{1}{4\w^2}(1-\k)$ is still in the valid regime. 
\end{remark}

\begin{remark} 
\label{rmk: ASGD SGD}  {{\bf When is ASGD more efficient than SGD?}}
  Assume we have $m$ local workers and the learning rate is larger than the threshold $\eta >
  \frac{1}{4\w^2}(1-\k)$. When the perturbation $\cc$ is small, for single batch SGD, the decay rate
  is $e^{-2 k}$ after $k$ steps, while for ASGD, the decay rate is $e^{-2(1-\k)k}$ after calculating
  $k$ gradients. Since now we have $m$ local workers, for the same amount of time, the decay rate
  for ASGD becomes $e^{-2(1-\k)m k}$. Therefore, as long as $(1-\k)m > 1$, ASGD will be more
  efficient than SGD. Since the expectation of the random staleness $\tau_k$ is $\frac{1}{1-\k}$, in
  other words, when the number of local workers is larger than the expected 
  staleness, then ASGD is more efficient than SGD. 
\end{remark}


The proof of the theorem is given in Section \ref{sec: proof of main thm}. The main ingredient of the
proof is the following Lyapunov functional $H(t)$,
\begin{equation}
  \label{def: H}
  H(t) =\ll \nb_xh \rl^2_\st + C\ll \nb_vh \rl^2_\st + 2\h{C}\la\nb_xh, \nb_vh \ra_\st
\end{equation}
where $C, \h{C}$ are constants to be determined later.  Note that,
\begin{equation}
  \label{eq: pt_tH}
  \frac{d}{dt}H(t) =\frac{d}{dt}\(\ll \nb_xh \rl^2_\st + C\ll \nb_vh \rl^2_\st\) +
  2\h{C}\frac{d}{dt}\la\nb_xh, \nb_vh \ra_\st.
\end{equation}
The first two terms can be estimated with an energy estimation given in Lemma \ref{lemma: nb_xh and
  nb_vh} of $\nb_x(\ref{eq: VFP})$, $\nb_v(\ref{eq: VFP})$, while the estimation of the last term is
given in Lemma \ref{lemma: nb_xh nb_vh}. Combining the result in Lemmas \ref{lemma: nb_xh and nb_vh}
and \ref{lemma: nb_xh nb_vh} leads to,
\begin{equation}
  \frac{1}{2}\pt_t H(t) + \t{C} H(t) \leq 0.
\end{equation}
Finally, the exponential decay of $\ll h(t) \rl^2_\st$ can be derived from this inequality and the
relationship between $H(t)$ and $\ll h(t) \rl^2$.

\section{Proof of Theorem \ref{thm: main thm} }
\label{sec: proof of main thm}

The following proposition summarizes a few equalities and inequalities, which will be used
frequently in the proof of the main theorem. The proofs are provided in Appendix \ref{apdx:
  inequlities}.
\begin{proposition}
  \label{prop: inequlities}
  For $\forall g(t,x,v), h(t,x,v)$, the following statements hold
  \begin{itemize}
  \item [(a)] $\ds \la \T g,  h\ra_\st = -\la Th, g\ra_\st, \qd \la \T h,  h\ra_\st = 0.$
  \item [(b)] $\ds \la Lg, h\ra_\st = -\frac\g\beta\la \nb_v g, \nb_vh \ra_\st. $
  \item [(c)] $\ds \la \K g, g\ra_\st \leq \cc\bb \ll g\rl^2_\st, \qd \ds \la \K g, h\ra_\st + \la \K h, g\ra_\st \leq \cc\bb\( \ll g\rl^2_\st +  \ll h\rl^2_\st\)$, \qd where $\bb = \frac{\max\{1,\b,\g^2,\beta\g, \beta\w^2\}}{\min\{1,\w^2\}}$.
  \item [(d)] $\ds \la \nb_x(\K h), \nb_xh \ra_\st \leq \frac{11}{2}\cc\bb^2 \ll \nb_xh\rl^2_\st +2\cc\bb^2\ll \nb_vh \rl^2_\st.$
  \item [(e)] $\ds \la \nb_v(\K h), \nb_vh \ra_\st \leq \frac{11}2\cc\bb^2\ll \nb_vh\rl^2_\st + 2\cc\bb^2\ll\nb_xh\rl^2_\st $.
  \item [(f)] $\ds \la \nb_x(\K h), \nb_vh \ra_\st + \la \nb_v(\K h), \nb_xh \ra_\st\leq \frac{15}2\cc\bb^2\ll\nb_xh\rl^2_\st +\frac{15}2\cc\bb^2\ll\nb_vh\rl^2_\st$.
  \end{itemize}
\end{proposition}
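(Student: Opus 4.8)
Every identity and estimate follows by integrating by parts against the Gaussian weight $M=\Mx\Mv$, using the elementary relations $\nb_xM=-\beta\w^2xM$ and $\nb_vM=-\beta vM$; the boundary terms vanish because $M$ decays like a Gaussian and $g,h$ lie in the relevant weighted Sobolev space. Parts (a) and (b) carry no perturbation and are immediate; parts (c)--(f) use, in addition, the uniform bounds of Assumption \ref{as: k}, invoked after Cauchy--Schwarz and Young's inequality, and for (d)--(f) the Poincar\'e inequality (\ref{poincare ineq}).

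For (a), I would integrate $\int(v\cd\nb_xg)hM$ by parts in $x$ and $-\w^2\int(x\cd\nb_vg)hM$ by parts in $v$; the two weight contributions $\pm\beta\w^2\int(x\cd v)ghM$ cancel, leaving exactly $-\la g,\T h\ra_\st$, and the identity $\la\T h,h\ra_\st=0$ then follows by setting $g=h$. For (b), $\L$ is already in the form $\tfrac\g\beta\tfrac1M\nb_v\cd(M\nb_v)$, so a single integration by parts in $v$ gives $\la\L g,h\ra_\st=-\tfrac\g\beta\int M\,\nb_vg\cd\nb_vh\,dxdv=-\tfrac\g\beta\la\nb_vg,\nb_vh\ra_\st$.

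For (c)--(f) the plan is uniform. After the change of variables $\th=-\tfrac1{\w^2}(v+\g x)$, the perturbation $\v$ is a function of $(x,v)$ whose $x$- and $v$-derivatives carry the chain-rule factors $-\g/\w^2$ and $-1/\w^2$; in particular $\nb_x\cd\v=-\tfrac\g{\w^2}\operatorname{tr}\v'$, $\nb_v\cd\v=-\tfrac1{\w^2}\operatorname{tr}\v'$, and $\v'$ is symmetric because $\v=\nb f-\w^2\th$. Using the definition in (\ref{eq: def of operators}), I would expand $\K h$ --- and, for (d)--(f), $\nb_x(\K h)$ or $\nb_v(\K h)$ --- into a finite sum of terms, each a coefficient (built from $\v,\v',x,v$ and the constants $1,\g,\beta\w^2,\beta\g$) times a derivative of $h$ of order $\le1$ in (c) and $\le2$ in (d)--(f). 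Pairing this sum against $h$ (resp.\ $\nb_xh$ or $\nb_vh$) in $\la\cd,\cd\ra_\st$, I would then: integrate by parts every term carrying a top-order derivative of $h$ (this lowers the order and turns the coefficient into $\nb_\bullet\cd(\v M)/M$, again a bounded combination of the quantities in Assumption \ref{as: k}); apply Cauchy--Schwarz and Young's inequality termwise; in (d)--(f), absorb the leftover zeroth-order-in-$h$ contributions $\la(\text{coeff})\,h,\nb_\bullet h\ra_\st$ by Cauchy--Schwarz followed by (\ref{poincare ineq}) (legitimate for the fluctuation $h$, which has zero $M$-average); and bound every $L^\infty$-coefficient via Assumption \ref{as: k}. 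This yields $\la\K g,g\ra_\st\le\cc\bb\ll g\rl_\st^2$, and gives (d)--(f) bounded by $\cc\bb^2$ times the appropriate gradient norms, the extra power of $\bb=\frac{\max\{1,\b,\g^2,\beta\g,\beta\w^2\}}{\min\{1,\w^2\}}$ accommodating the Poincar\'e constant in the last step. The second inequality in (c) follows from the first by polarization.

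The part I expect to be the real work --- though not a conceptual obstacle --- is the constant bookkeeping: checking that after the expansions and integrations by parts every surviving coefficient is controlled by exactly one of the seven quantities bounded in Assumption \ref{as: k}, so that the prefactors come out as $\tfrac{11}{2},2,\tfrac{15}{2}$. The factor $d$ multiplying $\max_i\ll\v_i'\rl_{L^\infty}$ is precisely what controls the traces $\operatorname{tr}\v'=\sum_i\pt_{\th_i}\v_i$ appearing in $\nb_x\cd\v$, $\nb_v\cd\v$ and the $d$-fold sums $\sum_{i,k}$ produced by Young's inequality; the symmetry of $\v'$ is what lets $\ll\v'\cd x\rl_{L^\infty}$ and $\ll\v'\cd v\rl_{L^\infty}$ control $x^{T}\v'$ and $v^{T}\v'$. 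Finally, (f) should be organized by splitting $\la\nb_x(\K h),\nb_vh\ra_\st$ and $\la\nb_v(\K h),\nb_xh\ra_\st$ and integrating by parts symmetrically, so that no coefficient outside the list is ever required --- which is why the two cross terms carry the same constant $\tfrac{15}{2}$.
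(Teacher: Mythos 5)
Your proposal follows essentially the same route as the paper: integration by parts against the Gaussian weight $M$ for (a)--(b), and for (c)--(f) the chain-rule structure of $\v\(-\frac{1}{\w^2}(v+\g x)\)$, the bounds of Assumption \ref{as: k} after Cauchy--Schwarz/Young, and the Poincar\'e inequality (\ref{poincare ineq}) to absorb the zeroth-order terms, which is exactly how the paper proceeds. The only cosmetic differences are that the paper exploits the identity $(\nb_x-\g\nb_v)\v=0$ so the divergence terms in (c) cancel outright rather than being estimated, and it proves the symmetrized inequality in (c) by a direct computation rather than polarization --- the latter being legitimate only because the same computation actually yields the two-sided bound $\lv\la \K u,u\ra_\st\rv\le\cc\bb\ll u\rl_\st^2$, not merely the one-sided one stated.
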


The following lemma is the energy estimation of $\nb_x$(\ref{eq: VFP}) and $\nb_v$(\ref{eq: VFP}).
\begin{lemma}
  \label{lemma: nb_xh and nb_vh}
The weighted fluctuation function $h(t,x,v)$ satisfies
\begin{equation*}
  \begin{aligned}
    &\frac{d}{dt}\(\ll \nb_xh \rl^2_\st+C\ll \nb_xh \rl^2_\st \) + 2\frac\g\beta\sum_{i} \int M (\lv \pt_{v_i}\nb_x h \rv^2  + C\lv \pt_{v_i} \nb_vh \rv^2 )dv \\
    \leq & -2( C - \w^2) \la \nb_xh, \nb_vh\ra_\st - 2\g C \ll \nb_v h \rl^2_\st  + \({11} + 4C\)\cc\bb \ll \nb_x h \rl^2_\st +  \(4+11C\)\cc\bb \ll \nb_v h \rl^2_\st.
  \end{aligned}
\end{equation*}
\end{lemma}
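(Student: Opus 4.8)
The plan is to differentiate the microscopic equation (\ref{eq: VFP}) once in $x$ and once in $v$, to test each resulting equation against $\nb_xh$, resp.\ $\nb_vh$, in the weighted inner product $\la\cd,\cd\ra_\st$, and then to form the linear combination of the two identities with weight $C$. The only structural input beyond Proposition \ref{prop: inequlities} is how $\nb_x$ and $\nb_v$ commute with the transport operator $\T$ and the Fokker--Planck operator $\L$. Since $\T = v\cd\nb_x - \w^2x\cd\nb_v$ and $\L$ involves only $v$-derivatives and $v$-dependent coefficients, a direct computation gives
\begin{align*}
  \nb_x(\T h) &= \T(\nb_xh) - \w^2\nb_vh, \qquad \nb_v(\T h) = \T(\nb_vh) + \nb_xh,\\
  \nb_x(\L h) &= \L(\nb_xh), \qquad\qquad \nb_v(\L h) = \L(\nb_vh) - \g\nb_vh .
\end{align*}
Writing $u=\nb_xh$ and $w=\nb_vh$, applying $\nb_x$ and $\nb_v$ to (\ref{eq: VFP}) then produces the coupled system
\begin{align*}
  \pt_tu + \T u &= \L u + \w^2 w + \nb_x(\K h),\\
  \pt_tw + \T w &= \L w - \g w - u + \nb_v(\K h).
\end{align*}

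Next I would take $\la\cd,u\ra_\st$ of the first equation and $\la\cd,w\ra_\st$ of the second, interpreting the inner products componentwise. By Proposition \ref{prop: inequlities}(a) the transport terms $\la\T u,u\ra_\st$ and $\la\T w,w\ra_\st$ drop out, and by Proposition \ref{prop: inequlities}(b), $\la\L u,u\ra_\st = -\frac\g\beta\ll\nb_vu\rl^2_\st$ and $\la\L w,w\ra_\st = -\frac\g\beta\ll\nb_vw\rl^2_\st$, which are precisely the dissipation terms $\frac\g\beta\sum_i\int M\big(|\pt_{v_i}\nb_xh|^2+C|\pt_{v_i}\nb_vh|^2\big)$ on the left of the claimed inequality. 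One is then left with the two energy identities
\begin{align*}
  \tfrac12\tfrac{d}{dt}\ll u\rl^2_\st + \tfrac\g\beta\ll\nb_vu\rl^2_\st &= \w^2\la w,u\ra_\st + \la\nb_x(\K h),u\ra_\st ,\\
  \tfrac12\tfrac{d}{dt}\ll w\rl^2_\st + \tfrac\g\beta\ll\nb_vw\rl^2_\st &= -\g\ll w\rl^2_\st - \la u,w\ra_\st + \la\nb_v(\K h),w\ra_\st .
\end{align*}

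To conclude, I would multiply the $w$-identity by $C$, add it to the $u$-identity, and multiply through by $2$. Using the symmetry of $\la\cd,\cd\ra_\st$, the two cross terms combine to $2(\w^2-C)\la\nb_xh,\nb_vh\ra_\st = -2(C-\w^2)\la\nb_xh,\nb_vh\ra_\st$, the friction term becomes $-2\g C\ll\nb_vh\rl^2_\st$, and the perturbation part $2\la\nb_x(\K h),\nb_xh\ra_\st + 2C\la\nb_v(\K h),\nb_vh\ra_\st$ is bounded directly by Proposition \ref{prop: inequlities}(d) and (e), giving
\begin{equation*}
  2\la\nb_x(\K h),\nb_xh\ra_\st + 2C\la\nb_v(\K h),\nb_vh\ra_\st \le (11+4C)\cc\bb^2\ll\nb_xh\rl^2_\st + (4+11C)\cc\bb^2\ll\nb_vh\rl^2_\st .
\end{equation*}
Collecting everything yields the inequality of the lemma.

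I expect the only genuinely delicate point to be the commutator computation of the first step, and in particular getting the signs of the corrections $+\w^2w$ (from $\nb_x\T$) and $-u$ (from $\nb_v\T$) right: these are exactly what generate the indefinite off-diagonal term $-2(C-\w^2)\la\nb_xh,\nb_vh\ra_\st$, which the $v$-only dissipation cannot absorb and which must therefore be passed to the mixed-derivative estimate of Lemma \ref{lemma: nb_xh nb_vh} in order to close the Lyapunov functional $H(t)$. Everything else is integration by parts against the Gaussian weight $M$ — legitimate since $h$ and its derivatives decay, so no boundary terms arise — combined with the perturbation bounds already established in Proposition \ref{prop: inequlities}.
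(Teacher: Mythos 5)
Your proposal is correct and follows essentially the same route as the paper: differentiate the microscopic equation in $x$ and in $v$, test against $\nb_xh M$ and $\nb_vh M$, combine with weight $C$, cancel the transport terms via Proposition \ref{prop: inequlities}(a), extract the dissipation via (b), and bound the perturbation terms via (d) and (e). Your commutator identities and final constants are right; note that your $\cc\bb^2$ in the perturbation bound is what Proposition \ref{prop: inequlities}(d),(e) actually deliver (the lemma statement's $\cc\bb$ appears to be a typo in the paper), so no further adjustment is needed.
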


\begin{proof}
  After taking $\nb_x$ and $\nb_v$ to (\ref{eq: VFP}), multiplying them with $\nb_xhM$ and
  $\nb_vhM$ respectively, and integrating over $dxdv$, one obtains
  \begin{equation*}
    \begin{aligned}
      &\frac12\frac{d}{dt}\ll \nb_xh \rl^2_\st + \la T\nb_xh, \nb_xh \ra_\st -\w^2 \la \nb_vh, \nb_xh\ra_\st = \la L\nb_xh, \nb_xh \ra_\st + \la \nb_x(\K h), \nb_xh \ra_\st .\\
      &\frac12\frac{d}{dt}\ll \nb_vh \rl^2_\st + \la T\nb_vh, \nb_vh \ra_\st + \la \nb_xh, \nb_vh\ra_\st = \la L\nb_vh, \nb_vh \ra_\st  - \b\la \nb_vh, \nb_vh \ra_\st  + \la \nb_v(\K h), \nb_vh \ra_\st.
    \end{aligned}
  \end{equation*}
  By invoking Proposition \ref{prop: inequlities}/(a), the second term of the LHS of each equation
  vanishes. Multiplying these two equations with $1$ and $C$ respectively and adding them
  together gives rise to 
  \begin{equation}
    \label{eq: lemma44_1}
    \begin{aligned}
      &\frac12\frac{d}{dt}\(\ll \nb_xh \rl^2_\st+C\ll \nb_xh \rl^2_\st \) - \(\la L\nb_xh, \nb_xh \ra_\st + C\la L\nb_vh, \nb_vh \ra_\st\) \\
      \leq & -( C - \w^2) \la \nb_xh, \nb_vh\ra_\st - \g C \ll \nb_v h \rl^2_\st + \la \nb_x(\K h), \nb_xh \ra_\st + C\la \nb_v(\K h), \nb_vh \ra_\st
    \end{aligned}
  \end{equation}
  After applying Proposition \ref{prop: inequlities}/(b),(d),(e), one obtains
  \begin{equation}
    \label{eq: lemma44_1}
    \begin{aligned}
      &\frac12\frac{d}{dt}\(\ll \nb_xh \rl^2_\st+C\ll \nb_xh \rl^2_\st \) + \frac\g\beta\(\sum_{i} \ll\pt_{v_i}\nb_x h \rl_\st^2  + C\ll \pt_{v_i} \nb_vh \rl_\st^2\) \\
      \leq & -( C - \w^2) \la \nb_xh, \nb_vh\ra_\st - \g C \ll \nb_v h \rl^2_\st + \(\frac{11}2 + 2C\)\cc\bb \ll \nb_x h \rl^2_\st +  \(2+\frac{11}2C\)\cc\bb \ll \nb_v h \rl^2_\st.
    \end{aligned}
  \end{equation}
\end{proof}

\begin{lemma}
  \label{lemma: nb_xh nb_vh}
  \begin{equation*}
    \begin{aligned}
      &\frac{d}{dt}\la \nb_xh,  \nb_vh \ra_\st + 2\frac{\g}{\beta}\sum_{i} \la  \pt_{v_i}\nb_x h, \pt_{v_i} \nb_vh \ra_\st \\
      \leq & -\g \la \nb_xh, \nb_vh\ra_\st + \w^2 \ll \nb_v h \rl^2_\st - \ll \nb_xh \rl^2_\st+\frac{15}2\cc\bb^2\ll\nb_xh\rl^2_\st +\frac{15}2\cc\bb^2\ll\nb_vh\rl^2_\st.
    \end{aligned}
  \end{equation*}
\end{lemma}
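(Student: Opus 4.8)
The plan is to mimic the energy estimate of Lemma \ref{lemma: nb_xh and nb_vh}, but now applied to the mixed inner product $\la \nb_x h, \nb_v h\ra_\st$. First I would compute $\frac{d}{dt}\la \nb_xh, \nb_vh\ra_\st$ by differentiating under the integral sign, which produces $\la \pt_t\nb_xh, \nb_vh\ra_\st + \la \nb_xh, \pt_t\nb_vh\ra_\st$. Into this I substitute the two equations for $\pt_t\nb_xh$ and $\pt_t\nb_vh$ obtained by applying $\nb_x$ and $\nb_v$ to \eqref{eq: VFP}; these are exactly the equations written at the start of the proof of Lemma \ref{lemma: nb_xh and nb_vh}, namely
\begin{equation*}
  \pt_t\nb_xh + T\nb_xh - \w^2\nb_vh = L\nb_xh + \nb_x(\K h),\qquad
  \pt_t\nb_vh + T\nb_vh + \nb_xh = L\nb_vh - \b\nb_vh + \nb_v(\K h).
\end{equation*}
So I would be left with a sum of six pairs of terms, which I handle one pair at a time.

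The transport terms combine via Proposition \ref{prop: inequlities}/(a): $\la T\nb_xh, \nb_vh\ra_\st + \la \nb_xh, T\nb_vh\ra_\st = \la T\nb_xh, \nb_vh\ra_\st - \la T\nb_xh, \nb_vh\ra_\st = 0$, so the streaming part cancels exactly. The Fokker--Planck terms $\la L\nb_xh, \nb_vh\ra_\st + \la \nb_xh, L\nb_vh\ra_\st$ are treated with Proposition \ref{prop: inequlities}/(b), which turns each into $-\frac{\g}{\beta}\la \nb_v\nb_xh, \nb_v\nb_vh\ra_\st$, accounting for the $2\frac{\g}{\beta}\sum_i \la \pt_{v_i}\nb_xh, \pt_{v_i}\nb_vh\ra_\st$ term that is moved to the left-hand side of the claimed inequality. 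The zeroth-order coupling terms give $\w^2\la \nb_vh, \nb_vh\ra_\st = \w^2\ll \nb_vh\rl^2_\st$ (with a plus sign, matching the statement) from the $\w^2\nb_vh$ in the first equation, and $-\la \nb_xh, \nb_xh\ra_\st = -\ll \nb_xh\rl^2_\st$ from the $+\nb_xh$ in the second equation, while the friction term $-\b\la \nb_xh, \nb_vh\ra_\st$ from $-\b\nb_vh$ combines with nothing — but wait, this should produce the $-\g\la\nb_xh,\nb_vh\ra_\st$ in the statement, so here one must recall that $\b = \g$ in the notation of the paper (the macro \verb|\b| is redefined to $\gamma$ in the preamble, and indeed $\K$ and $\L$ are written with $\g$). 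Thus $-\b\la\nb_xh,\nb_vh\ra_\st = -\g\la\nb_xh,\nb_vh\ra_\st$ exactly as claimed.

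Finally the perturbation terms $\la \nb_x(\K h), \nb_vh\ra_\st + \la \nb_v(\K h), \nb_xh\ra_\st$ are bounded directly by Proposition \ref{prop: inequlities}/(f), yielding $\frac{15}{2}\cc\bb^2\ll\nb_xh\rl^2_\st + \frac{15}{2}\cc\bb^2\ll\nb_vh\rl^2_\st$. Collecting all the pieces — the vanishing transport term, the moved diffusion term, the $-\g$ coupling, the $+\w^2\ll\nb_vh\rl^2_\st$, the $-\ll\nb_xh\rl^2_\st$, and the $\K$ bound — gives exactly the stated inequality. I expect the only delicate point to be bookkeeping: making sure the signs of the coupling terms $\pm\w^2$, $\mp 1$ come out correctly after the integration by parts in $\la T\cdot,\cdot\ra_\st$, and correctly identifying $\b$ with $\g$; there is no real analytic obstacle here since all the hard estimates are already packaged into Proposition \ref{prop: inequlities}. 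The proof is therefore a short substitute-and-collect argument, parallel to Lemma \ref{lemma: nb_xh and nb_vh}.
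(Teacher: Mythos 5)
Your proposal is correct and follows essentially the same route as the paper: differentiate the mixed term, substitute the $\nb_x$- and $\nb_v$-differentiated forms of \eqref{eq: VFP} (with the commutator terms $-\w^2\nb_vh$, $+\nb_xh$, $-\b\nb_vh$), cancel the transport part via Proposition \ref{prop: inequlities}(a), convert the $\L$ terms via (b), and bound the perturbation pair via (f), recalling $\b=\g$. No gaps.
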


\begin{proof}
  Taking $\nb_x$ and $\nb_v$ to (\ref{eq: VFP}), multiplying them by $\nb_vh M$ and $\nb_xh M$
  respectively, and integrating over $dxdv$, one obtains
  \begin{equation*}
    \begin{aligned}
      &\la \frac{d}{dt} \nb_xh, \nb_vh\ra_\st  + \la T\nb_xh, \nb_vh \ra_\st -\w^2 \la \nb_vh, \nb_vh\ra_\st = \la L\nb_xh, \nb_vh \ra_\st + \la \nb_x(\K h), \nb_vh \ra_\st .\\
      &\la \frac{d}{dt} \nb_vh, \nb_xh\ra_\st + \la T\nb_vh, \nb_xh \ra_\st + \la \nb_xh, \nb_xh\ra_\st = \la L\nb_vh, \nb_xh \ra_\st  - \b\la \nb_vh, \nb_xh \ra_\st  + \la \nb_v(\K h), \nb_xh \ra_\st.
    \end{aligned}
  \end{equation*}
  From Proposition \ref{prop: inequlities}/(a), the sum of the second terms on the LHS of both
  equations vanishes. Applying Proposition \ref{prop: inequlities}/(b) to the first term on the RHS
  of both equations combine them into a single term. Finally, summing the above two equations and
  applying Proposition \ref{prop: inequlities}/(f) to the last terms leads to
  \begin{equation}
    \begin{aligned}
      &\frac{d}{dt}\la \nb_xh,  \nb_vh \ra_\st + 2\frac\g\beta\sum_{i} \la  \pt_{v_i}\nb_x h, \pt_{v_i} \nb_vh \ra_\st \\
      \leq & -\g \la \nb_xh, \nb_vh\ra_\st + \w^2 \ll \nb_v h \rl^2_\st - \ll \nb_xh \rl^2_\st +\frac{15}2\cc\bb^2\ll\nb_xh\rl^2_\st +\frac{15}2\cc\bb^2\ll\nb_vh\rl^2_\st.
    \end{aligned}
  \end{equation}
\end{proof}

\paragraph{The proof of Theorem \ref{thm: main thm} }

By combining the results in Lemma \ref{lemma: nb_xh and nb_vh} and Lemma \ref{lemma: nb_xh nb_vh},
one concludes that
\begin{equation}
  \label{eq: Ht 1}
  \begin{aligned}
    &\frac{d}{dt}H(t) +\int [ \nb_xh, \nb_vh] K [ \nb_xh, \nb_vh]^\top\frac1Mdxdv \\
    & + \int \frac{2\g}{\beta}\sum_{i} [\pt_{v_i}\nb_x h,  \pt_{v_i} \nb_vh ]P [\pt_{v_i}\nb_x h,  \pt_{v_i} \nb_vh ]^\top\frac1Mdxdv \\
    \leq &\(11 + 4C + 15\h{C}\)\cc\bb^2\ll\nb_xh\rl^2_\st +\(4 + 11C + 15\h{C}\)\cc\bb^2\ll\nb_vh\rl^2_\st,
  \end{aligned}
\end{equation}
where
\begin{equation}
  \label{def of K}
  K = \left[ 
    \begin{aligned} 
      & 2\h{C}I_d  &(C - \w^2+\g\h{C})I_d\\
      &(C - \w^2+\g\h{C})I_d &(2\g C - 2\w^2\h{C})I_d
    \end{aligned}\right],\qd
  P =  \left[ 
    \begin{aligned} 
    & I_d  &\h{C}I_d\\
    &\h{C}I_d &CI_d
    \end{aligned}\right].
\end{equation}
Note that $K$ can be decomposed as,
\begin{equation}
\label{def of Q, P}
    K = QP + P Q^\top , \qd 
    \text{wtih } Q =  \left[ 
    \begin{aligned} 
    & 0I_d  &I_d\\
    &-\w^2I_d &\g I_d
    \end{aligned}\right].
\end{equation}
By invoking Lemma 4.3 in \cite{arnold2014sharp}, we know that there exists a positive definite matrix
$P$ such that,
\begin{equation}
  \label{eq: relation K P}
  K = QP + Q^\top P \geq 2 \mu P, \qd \text{with } \mu = \min\{\mathrm{Re}(\lam): \lam \text{ is an eigenvalue of }Q\}.
\end{equation}
The value of $\mu$, $C$, and $\h{C}$ can be separated into three cases. 
\begin{itemize}
\item [-]  case 1: $\ds \g < 2\w: \qd\mu = \g, \qd C = \w^2,\qd\h{C} = \g/2$ 
\item [-]  case 2: $\ds \g > 2\w: \qd\mu = \g - \sqrt{\g^2 - 4\w^2}, \qd C = \g^2/2-\w^2, \qd\h{C} = \g/2$
\item [-] case 3:  $\ds \g = 2\w: \qd$ For $\forall \d>0$, there exists $\mu = \g -\d, C(\d),\h{C}(\d)$, such that  (\ref{eq: relation K P}) holds. 
\end{itemize}
Inserting $K\geq2\mu P$ and using the fact that
$H(t)=\int[\nb_xh,\nb_vh]P[\nb_xh,\nb_vh]^\top\frac1Mdxdv$, we can bound the second term in
(\ref{eq: Ht 1}) from below by $2\mu H(t)$. By the positive definiteness of $P$, the third term is
always positive. Therefore,
\begin{equation*}
  \begin{aligned}
    &\frac{d}{dt}H(t) + 2\mu H(t)\leq \(11 + 4C + 15\h{C}\)\cc\bb^2\ll\nb_xh\rl^2_\st +\(4 + 11C + 15\h{C}\)\cc\bb^2\ll\nb_vh\rl^2_\st,\\
    &\frac{d}{dt}H(t) + 2(\mu -\e) H(t) \leq -2\e H(t) + \(11 + 11C + 15\h{C}\)\cc\bb^2\(\ll\nb_xh\rl^2_\st +\ll\nb_vh\rl^2_\st\),\\
    &\frac{d}{dt}H(t) + 2(\mu -\e) H(t) \leq -\e\ll \nb_xh +\h{C}\nb_vh\rl^2_\st-\e\ll \sqrt{C}\nb_vh + \frac{\h{C}}{\sqrt{C}}\nb_xh\rl^2_\st-\e\(C - \h{C}^2 \)\ll \nb_v h \rl^2_\st\\
    & - \frac{\e}{C}\(C - \h{C}^2 \)\ll \nb_x h \rl^2_\st  + \(11 + 11C + 15\h{C}\)\cc\bb^2\(\ll\nb_xh\rl^2_\st +\ll\nb_vh\rl^2_\st\).
  \end{aligned}
\end{equation*}
The RHS is less than $0$ for all $\nb_xh, \nb_vh$ if 
\begin{equation*}
  \begin{aligned}
    & \(11 + 11C + 15\h{C}\)\cc\bb^2 \leq \e(C - \h{C}^2)\min\l\{1,\frac{1}{C}\r\},
  \end{aligned}
\end{equation*}
which implies that as long as $\cc$ is sufficiently small one has
\begin{equation}
  \label{eq: Ht 2}
  \begin{aligned}
    &\frac{d}{dt}H(t) + 2(\mu -\e) H(t) \leq 0
  \end{aligned}
\end{equation}
for $\e = \(11 + 11C + 15\h{C}\)\cc\bb^2\frac{\max\{1,C\}}{C - \h{C}^2}$. By integrating (\ref{eq:
  Ht 2}) over time and applying Grownwall's inequality to it, we obtain
\begin{equation*}
  \begin{aligned}
    &H(t) \leq H(0)e^{-2(\mu - \e)t}.
  \end{aligned}
\end{equation*}
By the Poincare inequality (\ref{poincare ineq}) and the positive definiteness of $P$, one can bound
$H(t)$ from below by $\ll h\rl^2_\st$ up to a constant,
\begin{equation*}
  \begin{aligned}
    &H(t) \gtrsim (\ll \nb_x h \rl^2_\st + \ll \nb_v h \rl^2_\st ) \gtrsim \ll h \rl^2_\st.
  \end{aligned}
\end{equation*}
Therefore, we can conclude
\begin{equation*}
  \begin{aligned}
    &\ll h(t) \rl^2_\st \lesssim H(0)e^{-2(\mu - \e)t}.
  \end{aligned}
\end{equation*}



\appendix
\section*{Appendices}
\addcontentsline{toc}{section}{Appendices}
\renewcommand{\thesubsection}{\Alph{subsection}}

\subsection{The proof of Proposition \ref{prop: inequlities}}
\label{apdx: inequlities}
\setcounter{equation}{0}
\setcounter{theorem}{0}
\renewcommand\theequation{A.\arabic{equation}}
\begin{proof}

\begin{itemize}
\item [(a)] 
\end{itemize}
The first equation can be proved via integration by parts,
\begin{equation*}
  \begin{aligned}
    &\la Tg, h\ra_\st = \int \(v\cdot \nb_xg  -\w^2x\cdot\nb_vg \)h M dxdv \\
	=& \int \(-v\cdot \nb_xh + \w^2x\cdot\nb_vh \)g + gh(-v\cdot \nb_xM + \w^2x\cdot\nb_vM) dxdv\\
	=&-\la Th, g\ra_\st + \int gh\(v\cdot(\beta\w^2x) - \w^2x\cdot(\beta v)\) dxdv = -\la Th, g\ra_\st.
\end{aligned}
\end{equation*}
The second equation is directly followed by $\ds \la Th, h\ra_\st = -\la Th, h\ra_\st$.

\begin{itemize}
\item [(b)] 
\end{itemize}
This equation can be obtained also by integration by parts,
\begin{equation*}
  \begin{aligned}
	&\la Lg, h\ra_\st = \frac{\g}{\beta}\int \frac{1}{M}\nb_v\cdot(M\nb_vg) hMdxdv =
    -\frac{\g}{\beta}\int M\nb_vg \cdot\nb_vh dxdv = -\frac{\g}{\beta}\la\nb_vg , \nb_vh \ra_\st .
  \end{aligned}
\end{equation*}

\begin{itemize}
\item [(c)] 
\end{itemize}
The first equation can be written as
\begin{equation}
  \label{eq: c1}
  \begin{aligned}
    &\la \K g, g\ra_\st = \la \v\cdot\l(\nb_xg -\b\nb_vg\r), g\ra_\st - \la \beta\v \cdot \(\w^2x-\g v\) g, g\ra_\st\\
    =&\frac{1}{2} \la \v, (\nb_x -\b\nb_v)g^2  \ra_\st  - \la \beta\v \cdot \(\w^2x-\g v\) g, g\ra_\st.
  \end{aligned}
\end{equation}
By integrating by parts the first term and using the definition of $\v = \v\(-\frac{1}{\w^2}(v +\b
x)\), M = \text{exp}(-\beta\w^2|x|^2/2 + \beta|v|^2/2)$, one has
\begin{equation}
  \label{eq: c2}
  \begin{aligned}
    &\frac{1}{2} \la \v, (\nb_x -\b\nb_v)g^2  \ra_\st =-\frac{1}{2} \la (\nb_x -\b\nb_v)\cdot\v, g^2  \ra_\st - \frac{1}{2} \la \v\cdot(\nb_x -\b\nb_v)M, g^2  \ra\\
    =& -\frac{1}{2} \la -\frac{\g}{\w^2}\nb\cdot\v-\g(-\frac{1}{\w^2})\nb\cdot\v, g^2  \ra_\st + \frac{1}{2}\la \beta\v \cdot \(\w^2x-\g v\), g^2\ra_\st = \frac{1}{2}\la \beta\v \cdot \(\w^2x-\g v\), g^2\ra_\st.
  \end{aligned}
\end{equation}
Inserting the above equation into (\ref{eq: c1}) gives rise to
\begin{equation*}
  \begin{aligned}
    &\la \K g, g\ra_\st =  -\frac12\la \beta\v \cdot \(\w^2x-\g v\), g^2\ra_\st \leq \frac\cc2(\beta\w^2 + \beta\g)\ll g \rl^2_\st \leq \cc\bb \ll g \rl^2_\st,
  \end{aligned}
\end{equation*}
where we use the assumption $\ll \e\cdot x \rl_{L^\infty}, \ll \e\cdot v \rl_{L^\infty} \leq \cc$ in
Assumption \ref{as: k} at the first inequality.

Now we estimate $\la \K g, h\ra_\st + \la \K h, g\ra_\st $. First, similar to (\ref{eq: c2}), by the
definition of $\v, M$, one has
\begin{equation*}
  \begin{aligned}
    &\la \K g, h\ra_\st = \la \v\cdot\l(\nb_xg -\b\nb_vg\r), h\ra_\st - \la \beta\v \cdot \(\w^2x-\g v\) g, h\ra_\st\\
    =&-\la  (\nb_x -\b\nb_v)\v g, h  \ra_\st-\la  \v g,  (\nb_x -\b\nb_v)h  \ra_\st -\la  \v  (\nb_x -\b\nb_v)M g, h  \ra \\
    &- \la \beta\v \cdot \(\w^2x-\g v\) g, h\ra_\st\\
    =& 0-\la  \v g,  (\nb_x -\b\nb_v)h\ra_\st + 0.
\end{aligned}
\end{equation*}
Hence, 
\begin{equation*}
  \begin{aligned}
    \la \K g, h\ra_\st+ \la \K h, g\ra_\st  =&-\la  \v (\nb_x -\b\nb_v)h, g\ra_\st + \la \v\cdot\l(\nb_xh -\b\nb_vh\r), g\ra_\st - \la \beta\v \cdot \(\w^2x-\g v\) h, g\ra_\st\\
    =&- \la \beta\v \cdot \(\w^2x-\g v\) h, g\ra_\st \leq \frac\cc2\(\beta\w^2+\beta\g\)\(\ll h\rl^2_\st+\ll g\rl^2_\st\)\\
    \leq &\cc\bb\(\ll h\rl^2_\st+\ll g\rl^2_\st\).
  \end{aligned}
\end{equation*}

\begin{itemize}
\item [(d)] 
\end{itemize}
By the definition of $\K$ in (\ref{eq: def of operators}),
\begin{equation}
  \label{eq: d1}
  \begin{aligned}
    \la \nb_x(\K h), \nb_xh \ra_\st =& \la \K\nb_xh, \nb_xh \ra_\st + \la \nb_x\v(\nb_xh -  \b\nb_vh), \nb_xh \ra_\st  \\
    &- \beta\la \nb_x\v(\w^2x-\g v)h, \nb_xh \ra_\st - \beta\w^2\la \v h, \nb_xh \ra_\st\\
    \leq&\cc \bb\ll \nb_xh\rl^2_\st + \la \nb_x\v(\nb_xh -  \b\nb_vh), \nb_xh \ra_\st \\
    &- \beta\la \nb_x\v(\w^2x-\g v)h, \nb_xh \ra_\st+\frac12 \beta\w^2\cc(d\ll h \rl^2_\st + \ll \nb_xh \rl^2_\st),
\end{aligned}
\end{equation}
where we apply the inequality $(c)$ to the first term and Assumption \ref{as: k} $\max_{i}\ll \v_i
\rl_{L^\infty} \leq \cc$ to the last term of the above inequality. We will then estimate the second
and third terms.
\begin{equation}
  \label{eq: d2}
\begin{aligned}
  &\la \nb_x\v(\nb_xh -  \b\nb_vh), \nb_xh \ra_\st  = \sum_{i,j}\la \pt_{x_j}\v_i(\pt_{x_i}h -  \b\pt_{v_i}h), \pt_{x_j}h \ra_\st \\
  =& -\frac{\g}{\w^2}\sum_{i,j}\la \v_i'(\pt_{x_i}h -  \b\pt_{v_i}h), \pt_{x_j}h \ra_\st \leq \frac{\g}{\w^2} \sum_{i,j} \ll \v_i' \rl_{L^\infty} \(\frac12\ll\pt_{x_i}h\rl^2_\st + \frac\b2\ll \pt_{v_i}h\rl^2_\st + \ll \pt_{x_j}h\rl^2_\st\) \\
  \leq & \frac{\g}{\w^2}\( d \max_{i} \ll \v_i' \rl_{L^\infty} \(\frac12\ll \nb_xh\rl^2_\st + \frac\g2\ll \nb_vh \rl^2_\st\) +  \(\sum_{i} \ll \v_i' \rl_{L^\infty}\) \ll \nb_xh\rl^2_\st\) \\
  \leq& \frac{\g}{\w^2}\cc\( \frac12\ll \nb_xh\rl^2_\st + \frac\g2\ll \nb_vh \rl^2_\st + \ll \nb_xh\rl^2_\st\) \leq \cc\bb\(\frac{3}{2}\ll \nb_xh\rl^2_\st + \frac12\ll \nb_vh \rl^2_\st \)
\end{aligned}
\end{equation}
where $ d \max_{i} \ll \v_i' \rl_{L^\infty}, \sum_{i} \ll \v_i' \rl_{L^\infty}\ \leq \cc, $ are used
in the second inequality from last. The third term in (\ref{eq: d1}) can be bounded by,
\begin{equation}
  \label{eq: d3}
  \begin{aligned}
    &- \beta\la \nb_x\v(\w^2x-\g v)h, \nb_xh \ra_\st = -\beta\sum_{i,j}\la \pt_{x_j}\v_i(\w^2x_i-\g v_i)h, \pt_{x_j}h \ra_\st \\
    =& -\frac{\g\beta}{\w^2}\sum_{j}\la \sum_i\v_i'(\w^2x_i - \g v_i) h, \pt_{x_j}h \ra_\st \leq \frac{\g\beta}{\w^2} \cc \(\frac{d}{2}\w^2\ll h \rl^2_\st + \frac{d}{2}\g\ll h \rl^2_\st + \ll \nb_x h \rl^2_\st\)\\
    \leq &\cc\bb  \(d\beta \ll h \rl^2_\st + \ll \nb_xh \rl^2_\st\),
  \end{aligned}
\end{equation}
where $\ll \v'\cdot x\rl_{L^\infty}, \ll \v'\cdot v\rl_{L^\infty} \leq \cc$ are used in last
inequality. Then inserting (\ref{eq: d2}), (\ref{eq: d3}) into (\ref{eq: d1}) leads to
\begin{equation}
  \label{eq: d4}
  \begin{aligned}
    \la \nb_x(\K h), \nb_xh \ra_\st \leq& 4\cc\bb \ll \nb_xh\rl^2_\st +\frac12\cc\bb\ll \nb_vh
    \rl^2_\st+\frac32\cc\bb\(d\beta\ll h \rl^2_\st\).
  \end{aligned}
\end{equation}
Now applying the Poincare inequality (\ref{poincare ineq}) under the Gaussian measure to (\ref{eq:
  d4}) gives rise to
\begin{equation*}
  \begin{aligned}
	\la \nb_x(\K h), \nb_xh \ra_\st \leq& \(4\cc\bb + \frac{3}{2}\cc\bb^2\) \ll \nb_xh\rl^2_\st +\(\frac12\cc\bb + \frac32\cc\bb^2\)\ll \nb_vh \rl^2_\st.
  \end{aligned}
\end{equation*}

\begin{itemize}
\item [(e)] 
\end{itemize} 
Similar to the proof of (d), one has
\begin{equation*}
  \begin{aligned}
    &\la \nb_v(\K h), \nb_vh \ra_\st \\
    =& \la \K\nb_vh, \nb_vh \ra_\st + \la \nb_v\v(\nb_xh -  \b\nb_vh), \nb_vh \ra_\st  \\
    &- \beta\la \nb_v\v(\w^2x-\g v)h, \nb_vh \ra_\st + \beta\g\la \v h, \nb_vh \ra_\st\\
    \leq& \cc\bb\ll \nb_vh\rl^2_\st + \cc\bb\( \frac12\ll\nb_xh\rl^2_\st +  \frac32\ll\nb_vh\rl^2_\st  \)\\
    &+\cc\bb\( d\beta\ll h\rl_\st^2  + \ll\nb_vh \rl^2_\st\)+ \cc\bb\(\frac12d\beta\ll h \rl^2_\st +\frac12 \ll \nb_vh \rl^2_\st\)\\
    \leq &\(4\cc\bb+\frac32\cc\bb^2\)\ll \nb_vh\rl^2_\st + \(\frac12\cc\bb+\frac32\cc\bb^2\)\ll\nb_xh\rl^2_\st .
\end{aligned}
\end{equation*}

\begin{itemize}
\item [(f)] 
\end{itemize}
Similar to the proof of (d), one has
\begin{equation*}
  \begin{aligned}
    &\la \nb_x(\K h), \nb_vh \ra_\st + \la \nb_v(\K h), \nb_xh \ra_\st \\
    =&\( \la \K\nb_xh, \nb_vh \ra_\st +\la \K\nb_vh, \nb_xh \ra_\st\)+ \la \nb_x\v(\nb_xh -  \b\nb_vh), \nb_vh \ra_\st  - \beta\la \nb_x\v(\w^2x-\g v)h, \nb_vh \ra_\st\\
    & - \beta\w^2\la \v h, \nb_vh \ra_\st + \la \nb_v\v(\nb_xh -  \b\nb_vh), \nb_xh \ra_\st- \beta\la \nb_v\v(\w^2x-\g v)h, \nb_xh \ra_\st + \beta\g\la \v h, \nb_xh \ra_\st\\
    \leq&\(\frac92\cc\bb + 3\cc\bb^2\)\ll\nb_xh\rl^2_\st + \(\frac92\cc\bb + 3\cc\bb^2\)\ll\nb_vh\rl^2_\st.
  \end{aligned}
\end{equation*}

\end{proof}


\bibliographystyle{plain}

\end{document}